\def\eps{{\varepsilon}}
\newtheorem{theorem}{Theorem}
\newtheorem{proposition}[theorem]{Proposition}
\newtheorem{lem}[theorem]{Lemma}
\newtheorem{corollary}[theorem]{Corollary}
\theoremstyle{definition}
\theoremstyle{remark}
\newtheorem{remark}[theorem]{Remark}
\begin{document}
\title[Semi-algebraic sets and equilibria]
{Semi-algebraic sets and equilibria of binary games}
\author{Guillaume Vigeral (Corresponding author)}
\address{Universit\'e Paris-Dauphine, CEREMADE, Place du Mar{\'e}chal De Lattre de Tassigny, 75775 Paris cedex 16, France.  vigeral@ceremade.dauphine.fr}

\author{Yannick Viossat}
\address{Universit\'e Paris-Dauphine, CEREMADE, Place du Mar{\'e}chal De Lattre de Tassigny, 75775 Paris cedex 16, France. viossat@ceremade.dauphine.fr}

%

%
%

\date{}

%

\maketitle



\begin{abstract}

Any nonempty, compact, semi-algebraic set in $[0,1]^n$ is the projection of the set of mixed equilibria of a finite game with 2 actions per player on its first $n$ coordinates.  A similar result follows for sets of equilibrium payoffs. The proofs are constructive and elementary.\\


\noindent Keywords: semi-algebraic sets; Nash equilibria; equilibrium payoffs; binary games 
\end{abstract}

\section{Introduction}

In bimatrix games, the structure of the set of Nash equilibria is relatively well understood: this is a finite union of convex polytopes (Jansen, 1981 \cite{Ja81}). Moreover, the possible sets of Nash equilibrium payoffs have been characterized by Lehrer et al. (2011) \cite{LSV11}:  a subset $E$ of $\mathbb{R}^2$ is the set of Nash equilibrium payoffs of a bimatrix game if and only if this is a finite union of rectangles with edges parallel to the axes; that is, of the form: $E= \cup_{1 \leq i \leq m} [a_i, b_i] \times [c_i, d_i]$, where $m \in \mathbb{N}$ and $a_i$, $b_i$, $c_i$, $d_i \in \mathbb{R}$, with $a_i \leq b_i$, $c_i \leq d_i$. 

For finite games with 3-players or more, the picture is much less clear. It is easily seen that the set of Nash equilibria or of Nash equilibrium payoffs is nonempty, compact and semi-algebraic; however, which semi-algebraic sets really arise as sets of Nash equilibria or of Nash equilibrium payoffs is not known. A few results have been obtained. For instance, 
Datta (2003) \cite{Da03} showed that any real algebraic variety is isomorphic to the set of completely mixed Nash equilibria of a 3-player game, and also to the set of completely mixed equilibria of an $N$-player game in which each player has two strategies. More recently, Balkenborg and Vermeulen (2014, Theorem 6.1) \cite{BV14} 
 showed that any nonempty connected compact semi-algebraic set is homeomorphic to a connected component of the set of Nash equilibria of a finite game in which each player has only two strategies, all players have the same payoffs, and pure strategy payoffs are either $0$ or $1$. These results show that, modulo isomorphisms or homeomorphisms, and a focus on completely mixed equilibria or connected components of equilibria, all algebraic or nonempty compact semi-algebraic sets may be encoded as sets of Nash equilibria. We provide another result in this direction. 

Since the set of Nash equilibria of an $N$-player finite game is a nonempty compact semi-algebraic subset 
of some $\mathbb{R}^k$, it follows from Tarski-Seidenberg's theorem that the projection of such a set on a subspace 
$\mathbb{R}^n$, $n<k$, satisfies the same properties. We prove a kind of converse of this fact: for any nonempty compact 
semi-algebraic set $E$, there exists a finite game with $N > n$ players, each having only two pure strategies, such 
that $E$ is precisely the projection of the set of Nash equilibria of this game on its first $n$ coordinates (those of the first $n$ players). 
In this statement, we see a mixed strategy of an $N$-player game with two strategies per player as a vector $(x_1,...,x_N)$ in $[0,1]^N$; that is, we identify the strategy of the $i^{th}$ player with the probability $x_i$ that it assigns to the first of its two strategies.

The above result implies a similar result on equilibrium payoffs, as opposed to equilibria:  for any nonempty compact semi-algebraic set $E$ in $\mathbb{R}^n$, there exists a finite game with 
$N > n$ players, each having only two pure strategies, such that $E$ is precisely the set of Nash equilibrium payoffs of the first $n$ players; that is, the projection of the set of Nash equilibrium payoffs on its first $n$ coordinates (
as will become clear, the ``first $n$ players" in our result on equilibrium payoffs have payoffs given by affine transformations of the strategies of the ``first $n$ players" in our result on equilibria). As discussed further in the next section, the result on equilibria has been obtained 
independently by Yehuda John Levy \cite{LE15}, who also obtained more general results on semi-algebraic functions and correspondences, but our techniques and precise results are different. 

Some differences with Datta (2003) \cite{Da03} and Balkenborg and Vermeulen (2014) \cite{BV14} should be stressed. First, in our result, there is no isomorphism or homeomorphism involved, but a projection on the first $n$ players.
Second, our results do not concern the set of completely mixed Nash equilibria, or a connected component of equilibria, but the whole set of equilibria.  These are not related to algebraic varieties or to connected semi-algebraic sets, but to (nonempty compact) semi-algebraic sets, which need not be connected. Also note that there is a fundamental difference between the set of completely mixed Nash equilibria and the set of Nash equilibria: the first may be empty while the second cannot. This represents a conceptual difficulty: any construction needs to check at some point the nonemptiness of the input set.
Third, our proofs are fully elementary. To be more precise, given a set and certificates of semi-algebraicity, closedness and nonemptiness, our construction does not use any results from real algebraic geometry. Starting with a game with $n$ players with two strategies each and choosing their first strategies with probabilities $x_1$, ..., $x_n$, we show how to add additional players with two strategies such that in equilibrium, these additional players choose their first strategies with probabilities that are powers of the $x_i$, and how yet additional players then allow to build and combine any polynomial in $x_1$,..., $x_n$ in order to obtain that the set of equilibrium strategies of the initial $n$ players is a given (nonempty compact) semi-algebraic set in $[0, 1]^n$. A small modification of the game then allows to obtain a given (nonempty compact) semi-algebraic set in $\mathbb{R}^n$ as the set of Nash equilibrium payoffs of $n$ players of this game. Note that, by contrast with the work of Yehuda John Levy, we provide a bound on the number of additional players in our construction, which is not far from being tight. Once again, this bound is obtained only by elementary arguments.
Finally, when the semi-algebraic set is defined by polynomials with integer coefficients, we prove a more precise result ensuring that the constructed game has integer pure payoffs, at the cost of additional players. Once again the construction (given certificates) is elementary, but the bound on the number of players and on the size of the integer payoffs depends on precise results of algebraic geometry. 

Note that if additional players are not restricted to have only two actions, Yehuda John Levy proved that only three such additional players are needed \cite{LE15}.

The remainder of this note is organized as follows: we introduce some definitions and prove our main results in Section \ref{sec:main}. Section \ref{sec:comp} compares our work to that of Yehuda John Levy. Extensions are discussed in Section \ref{sec:ext}. Section \ref{sec:rk} concludes. 
\section{Definition and main results}
\label{sec:main}
A subset of $\mathbb{R}^n$ is semi-algebraic if it may be obtained by finitely many unions and intersections of sets defined by a polynomial equality or strict inequality. By the finiteness theorem (see for example Proposition 5.1 in \cite{BE80}), a closed semi-algebraic subset of $\mathbb{R}^n$ may also be described by finitely many unions and intersections of sets defined by a polynomial \emph{weak} inequality. In particular, a compact subset $E$ of $\mathbb{R}^n$ is semi-algebraic if and only if there exist positive integers $A$ and $B$ and polynomials in $n$ variables $P_{ab}$, $1 \leq a \leq A$, $1 \leq b \leq B$ such that: 
\begin{equation}
\label{eq;semialg}
E=\bigcup_{a=1}^A \bigcap_{b=1}^B \{x \in \mathbb{R}^n,\ P_{ab}(x)\leq0\}
\end{equation} 
Let us say that a game is binary if each player has only two pure strategies. Note that our notion of binary games is weaker than the notion used by Balkenborg and Vermeulen (2014) \cite{BV14}: they define a game to be binary if each player has two pure strategies, and if in addition, this is a common interest game (all players have the same payoff), with pure strategy payoffs always equal to $0$ or $1$. 

Our first result is on equilibria. As before, we identify in its statement the mixed strategy of the $i^{th}$ player with the probability $x_i$ that it assigns to the first of her two pure actions. 
\begin{proposition}\label{prop:main2}
If $E$ is a nonempty compact semi-algebraic subset of $[0,1]^n$, then there exists an $N$-player binary game (with $N > n$) such that the projection of its set of Nash equilibria on its first $n$ coordinates (those of the first $n$ players) is equal to $E$. 
\end{proposition}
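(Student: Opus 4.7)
The plan has three stages. First, since the payoff difference between the two pure actions of any player in a binary game is multilinear in the other players' strategies, powers and products of the $x_i$'s are not directly accessible, so I would add ``copy'' auxiliary players. For each $i$, a matching-pennies-style subgame on two auxiliary players $A_i,B_i$ with payoff differences $x_i - y_{B_i}$ and $y_{A_i} - x_i$ respectively forces $y_{A_i}=y_{B_i}=x_i$ in every equilibrium (with harmless extra flexibility when $x_i\in\{0,1\}$). Iterating yields as many copies of each $x_i$ as needed, and products of distinct copies let me realize any polynomial in $x_1,\dots,x_n$ as the payoff difference of a single player.

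Second, I would build the Boolean structure of \eqref{eq;semialg} on top of this polynomial layer. A ``switch'' player whose payoff difference is a polynomial $P$ has its equilibrium strategy equal to $1$ (resp.\ $0$) when $P>0$ (resp.\ $P<0$) and free when $P=0$. The intersection $\bigcap_b\{P_{ab}\leq 0\}$ is equivalent to $\max_b P_{ab}(x)\leq 0$, and $\max(P,Q)$ can itself be realized in equilibrium as the multilinear quantity $y_M P + (1-y_M)Q$, where $M$ is a new auxiliary player with payoff difference $P-Q$. Iterating binary max-gadgets realizes $\max_b P_{ab}$; an analogous min-gadget combines over $a$ into a single multilinear expression $F(x)$ which in equilibrium equals $\min_a\max_b P_{ab}(x)$, so that $x\in E$ if and only if $F(x)\leq 0$.

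Third, I would couple $F$ to the first $n$ players' payoffs so that the joint best-response conditions are solvable precisely on $\{F\leq 0\}$. This coupling is the main technical obstacle: if each main player's payoff difference is set simply to $\pm y_W$ for a single sign-switch $W$ reading $F$, spurious equilibria may appear at corners of $[0,1]^n$ where $F>0$, because the forced pure action is coincidentally self-consistent there. My resolution is to enforce the equivalent condition ``there exists $y\in[0,1]$ with $F(x)+My=0$'' (where $M$ is an explicit upper bound for $|F|$ on $[0,1]^n$) by an auxiliary indifference equation, protected from pure deviations by a matching-pennies-like substructure; this indifference is then linked to every main player's payoffs via two opposing ``push'' auxiliary players whose simultaneous activation when the indifference fails produces a genuine contradiction in player $i$'s best response, regardless of the value of $x_i$. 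A case-check at the interior and at the boundary of $[0,1]^n$ then shows that the projection of the resulting equilibrium set onto the first $n$ coordinates is exactly $E$, and counting the auxiliary players added at each stage yields a finite $N>n$, polynomial in $n$, $A$, $B$ and the degrees of the $P_{ab}$'s.
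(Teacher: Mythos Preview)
Your first two stages are sound alternatives to the paper's construction. Where the paper uses repeated squaring (players $X_{ik}$ with $x_{ik}=(x_i)^{2^k}$ in equilibrium, via Lemma~\ref{basicbis}) to realize monomials with only $O(n\log d)$ auxiliaries, your linear ``copy'' gadgets need $O(nd)$ auxiliaries but work for the same reason (Lemma~\ref{basic}). Your $\max$/$\min$ gadgets in stage~2 are a genuine alternative to the paper's device: the paper never computes $\min_a\max_b P_{ab}$ but instead introduces one switch player $S_{ab}$ per inequality and a single player $U$ whose Top-payoff is the multiaffine expression $\prod_a\sum_b s_{ab}$, which is positive in equilibrium exactly when $x\notin E$. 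Your route is slightly heavier but equally valid.

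The gap is in stage~3. Your coupling mechanism (``two opposing push players whose simultaneous activation \ldots\ produces a genuine contradiction in player $i$'s best response, regardless of the value of $x_i$'') cannot work as stated, for a structural reason you never address: your entire construction depends only on the polynomials $P_{ab}$ and the bound $M$, and nowhere uses a certificate that $E$ is nonempty. Feed the same construction an \emph{empty} $E$ (say $P_{11}\equiv 1$): you obtain a finite binary game, which by Nash's theorem has an equilibrium, whose projection on the first $n$ coordinates is therefore a nonempty set---not $E$. So either the construction already fails for nonempty $E$, or some step secretly uses nonemptiness, and you have not identified which. In particular, a player's best response is never ``contradictory'' on its own; contradictions arise only through cycles of best responses that have no fixed point in the bad region, and designing such a cycle without a target in $E$ is exactly the difficulty.

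The paper resolves this by explicitly picking a point $z\in E$ and giving player $X_i$ payoff difference $(z_i-x_{i0})u$. When $x\notin E$ the gadgets force $u=1$, and then Lemma~\ref{basic} forces $x_i=z_i$ for every $i$, i.e.\ $x=z\in E$, the desired contradiction. Conversely, for any $x\in E$ one sets $u=0$, making the $X_i$ indifferent. This use of a witness point is the missing idea in your stage~3; once you insert it (replacing your push players by the single player $U$ and the payoffs $(z_i-x_{i0})u$), the rest of your outline goes through, with a player count of order $nd+AB$ rather than the paper's $n\log d+AB$.
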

Our second result, a byproduct of our proof of Proposition \ref{prop:main2}, is on equilibrium payoffs. 
\begin{proposition}\label{prop:main1} If $E$ is a nonempty compact semi-algebraic subset of $\mathbb{R}^n$, then there exists an $N$-player binary game (with $N > n$) such that the projection of its set of Nash equilibrium payoffs on its first $n$ coordinates is equal to $E$. 
\end{proposition}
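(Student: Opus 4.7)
The plan is to derive Proposition~\ref{prop:main1} from Proposition~\ref{prop:main2} via a rescaling argument combined with a small payoff modification. Since $E$ is nonempty and compact, for each $i\leq n$ the quantities $a_i:=\min_{x\in E}x_i$ and $b_i:=\max_{x\in E}x_i$ are well defined and $E\subset\prod_{i=1}^n[a_i,b_i]$. I would introduce the affine bijection $\Phi:[0,1]^n\to\prod_{i=1}^n[a_i,b_i]$ given by $\Phi(x)_i=a_i+(b_i-a_i)x_i$ (with the degenerate convention $\phi_i\equiv a_i$ when $a_i=b_i$) and set $F:=\Phi^{-1}(E)$. Then $F$ is a nonempty compact semi-algebraic subset of $[0,1]^n$, and applying Proposition~\ref{prop:main2} to $F$ furnishes a binary game $G$ with $N>n$ players whose Nash equilibria, projected on the first $n$ coordinates, are exactly the points of $F$. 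As emphasized in the introduction, in this construction the first $n$ players are made payoff-indifferent (their utility is constant in their own action), and the constraint $(x_1,\ldots,x_n)\in F$ is carried entirely by the best-response conditions of the additional players.

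The modification then consists, for each $i\leq n$, of adjoining one further helper player $q_i$ whose equilibrium strategy is forced to equal $x_i$. This is a trivial invocation of Proposition~\ref{prop:main2}'s polynomial-encoding machinery with the degree-one polynomial $P(x)=x_i$. One then replaces player $i$'s payoff by the rule ``receive $b_i$ if $q_i$ plays their first action and $a_i$ otherwise''. Since player $i$'s payoff remains independent of their own move, they are still indifferent, and no other player's payoff is touched, so the set of equilibria projected on the first $n$ coordinates is still exactly $F$. However, player $i$'s expected equilibrium payoff is now $b_i x_{q_i}+a_i(1-x_{q_i})=\phi_i(x_i)$, and hence the projection of equilibrium payoffs on the first $n$ coordinates equals $\{(\phi_1(x_1),\ldots,\phi_n(x_n)):x\in F\}=\Phi(F)=E$, as desired.

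The main obstacle is checking that this payoff reassignment neither creates nor destroys equilibria. This hinges entirely on the property, which the proof of Proposition~\ref{prop:main2} must secure, that the first $n$ players are already trivially indifferent in $G$: any payoff modification that preserves this indifference leaves the equilibrium set invariant, because the best-response correspondences of all other players depend only on their own payoffs. A secondary, routine point is the availability of the helper players $q_i$; since $P(x)=x_i$ is one of the simplest polynomials, these players fit seamlessly into the polynomial-encoding construction of Proposition~\ref{prop:main2} and add at most $n$ further players to the total count.
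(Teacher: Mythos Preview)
Your approach has a genuine gap. You assert that in the construction of Proposition~\ref{prop:main2}, ``the first $n$ players are made payoff-indifferent (their utility is constant in their own action), and the constraint $(x_1,\ldots,x_n)\in F$ is carried entirely by the best-response conditions of the additional players.'' This is false. In the paper's construction, player $X_i$'s payoff is $(z_i-x_{i0})u$ if she plays Top and $0$ if she plays Bottom; this \emph{does} depend on her own action whenever $u>0$ and $x_{i0}\neq z_i$. That dependence is precisely what drives the contradiction when $x\notin F$: in that case $u=1$, and $X_i$'s best-response condition then forces $x_i=z_i$. If you overwrite $X_i$'s payoff with one independent of her own move, this forcing mechanism disappears, and equilibria with $x\notin F$ appear: take any $x\notin F$, set $x_{ik}=y_{ik}=x_i^{2^k}$, $u=1$, and $s_{ab}$ according to the sign of $P_{ab}(x)$; every player is best-responding. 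Hence your modified game's equilibrium projection is no longer $F$, and the argument collapses.

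The paper's proof avoids this by leaving the $X_i$ untouched and instead observing that the auxiliary players $Y_{i0}$, already present in the construction, have equilibrium payoff exactly $x_i$ (at equilibrium $x_{i0}=x_i$, so both rows of $Y_{i0}$'s table equal $x_i$). One simply relabels $Y_{10},\ldots,Y_{n0}$ as the ``first $n$ players'' for the payoff statement. For general $E\subset[-D,D]^n$, one applies the construction to $E'=\{x:-D+2Dx\in E\}\subset[0,1]^n$ and replaces $Y_{i0}$'s payoff entries by the positive affine transformation $(-D+2Dx_{i0},\,-D+2Dx_i)$; this is strategically equivalent to the original, so no best response changes and the equilibrium set is preserved, while $Y_{i0}$'s equilibrium payoff becomes $-D+2Dx_i$. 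Your idea could be repaired along these lines---add $n$ fresh always-indifferent ``output'' players rather than overwriting the $X_i$---but as written the proposal does not work.
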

Essentially the same results have been independently obtained by Yehuda John Levy \cite{LE15}, but our techniques are different. Moreover, while the results of Yehuda John Levy are stronger in that Proposition \ref{prop:main2} appears as a corollary of a more general result on semi-algebraic functions and correspondences, our proof is more elementary and we obtain a bound on the number of players needed: if $E$ is described by (\ref{eq;semialg}), then both in Propositions \ref{prop:main2} and \ref{prop:main1}, \[N\leq 1+AB + n(3+2\ln_2(d))\] where $d$ is such that each $P_{ab}$ is of degree at most $d$ in each variable.  

The proof is constructive. It relies on appropriate gadget games, in the sense of algorithmic game theory. Before introducing these gadgets, we need to clarify our notation. We only consider binary games and we denote the two pure strategies of each player by Top and Bottom. It will be convenient to use the same piece of notation for the name of a player and its probability to play Top except that, to be able to distinguish between players and strategies, we use uppercase letters for players. Thus, the $n$ basic players of the game bear the admittedly unusual names of players $X_1$, $X_2$,..., $X_n$, and $x_i$ is the probability that player $X_i$ plays Top. The players we need to add are called players $X_{ik}$, $Y_{ik}$, $S_{ab}$ or $U$. Player $X_{ik}$ will be such that, in equilibrium, its probability $x_{ik}$ of playing Top is equal to $(x_i)^{2^k}$. Since any positive integer is a sum of powers of $2$, products of $x_{ik}$ allow to obtain any power $(x_i)^q$ as the value in equilibrium of a multiaffine function of the probabilities used by the players of the game (where by multiaffine, we mean affine in each variable). Adding and multiplying such quantities allow to obtain the quantities $P_{ab}(x)$, where $x=(x_1,...,x_n)$, as the value in equilibrium of the probability that an additional player $S_{ab}$ plays Top. An additional gadget game then forces an additional player $U$ to play Top when $(x_1,\cdots,x_n)\notin E$. Finally, the payoffs of the original players $X_1,..., X_n$ are defined in such a way that, at any equilibrium in which $U$ plays Top, $(x_1,...,x_n)=(z_1,...,z_n)$ where $z=(z_1,...,z_n)$ is a fixed arbitrary element of the nonempty set $E$. Hence, at any equilibrium, if $x\notin E$ then $x=z \in E$ a contradiction. The converse, that is the fact that each $x\in E$ appears in an equilibrium, is an easy byproduct of the construction.

We define our binary games by giving the payoff of each player when she chooses Top or Bottom, as a function of the \emph{mixed} strategy profiles of her opponents (more precisely, of their probabilities to play Top). These expressions will always be multiaffine in the probabilities of playing Top of the opponents (that is affine with respect to the probability to play Top of each opponent), ensuring that they correspond to payoffs in the mixed extension of a binary game. For instance, our first gadget game will have (at least)  two players, say $X_{\alpha}$ and $X_{\beta}$, playing Top with probability $x_{\alpha}$ and $x_{\beta}$ respectively,  and with payoffs if they play Top or Bottom described by the following tables: 
 \begin{equation}
\label{eq:lm1}
\mbox{Player } X_{\alpha} \quad 
\begin{array}{c|c} 
T  & x \\ 
\hline
B & x_{\beta}
\end{array} 
\qquad \quad
\mbox{Player } X_{\beta} \quad 
\begin{array}{c|c} 
T  &  x_{\alpha} \\ 
\hline
B & x
\end{array} 
\end{equation}
where $x$ is a real number in $[0,1]$. This means that the payoffs of players $X_{\alpha}$ and $X_{\beta}$ are respectively $x_{\alpha} x +(1- x_{\alpha}) x_{\beta}$ and $x_{\beta} x_{\alpha} + (1- x_{\beta})x$. Letting Player $X_{\alpha}$ be the row player and Player $X_{\beta}$ the column player, the corresponding payoff matrix would be 


\[   \begin{array}{cc}
     &   \begin{array}{cc}
    T \quad & B  \\
  \end{array} \\
      \begin{array}{c}
    T \\
    B \\
  \end{array} &   \left(\begin{array}{cc}
    x,1 & x,x  \\
    1,0 & 0,x  \\
  \end{array}\right) \\
  \end{array}
\]

\noindent independently of any other player's action.

\begin{lem}\label{basic}
Let $x \in [0,1]$. In any game with 2 players whose payoffs are given by \eqref{eq:lm1}, and an arbitrary number of other players with unspecified payoffs, we have: in any equilibrium, $x_{\alpha} = x$. 
\end{lem}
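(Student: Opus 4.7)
My plan is to read off the best-response correspondences of $X_\alpha$ and $X_\beta$ directly from the payoff tables \eqref{eq:lm1} and then rule out $x_\alpha \neq x$ by a short case distinction. Since the payoffs of these two players depend only on $x_\alpha$, $x_\beta$ and the fixed scalar $x$, the remaining players of the game are irrelevant to the argument, and the ``arbitrary number of other players with unspecified payoffs'' clause poses no difficulty.

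From \eqref{eq:lm1}, the expected gain from playing Top rather than Bottom equals $x-x_\beta$ for $X_\alpha$ and $x_\alpha-x$ for $X_\beta$. Hence, in any Nash equilibrium of the game, the following implications hold: $x_\beta<x$ forces $x_\alpha=1$ and $x_\beta>x$ forces $x_\alpha=0$; symmetrically, $x_\alpha>x$ forces $x_\beta=1$ and $x_\alpha<x$ forces $x_\beta=0$. (When equality holds, the player is indifferent and the corresponding probability is free.)

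Suppose for contradiction that $x_\alpha>x$. Then the condition for $X_\beta$ yields $x_\beta=1$. Since $x_\alpha\leq 1$, we must have $x<1$, so $x_\beta=1>x$, and the condition for $X_\alpha$ then forces $x_\alpha=0$, contradicting $x_\alpha>x\geq 0$. The case $x_\alpha<x$ is handled by the symmetric argument: it yields $x_\beta=0$, which with $x>0$ forces $x_\alpha=1$, contradicting $x_\alpha<x\leq 1$. Thus $x_\alpha=x$ at any equilibrium.

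The construction is elementary and I do not foresee any real obstacle. The only point that requires a moment of care is the two boundary situations $x=0$ and $x=1$, where one of the strict-inequality branches becomes empty; in each of them the corresponding supposition $x_\alpha>x$ or $x_\alpha<x$ collapses immediately to a numerical impossibility ($x_\alpha>1$ or $x_\alpha<0$), so the case distinction still closes.
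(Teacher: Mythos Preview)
Your proof is correct and follows essentially the same route as the paper's: a case distinction on $x_\alpha>x$ versus $x_\alpha<x$, each time chasing the best responses of $X_\beta$ and then $X_\alpha$ to a contradiction. You are slightly more explicit than the paper about the boundary cases $x\in\{0,1\}$, but the underlying argument is identical.
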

\begin{proof}
Assume $x_{\alpha}>x$. Then Player $X_{\beta}$ plays Top, hence $x_{\beta}=1$. So $x_{\beta}>x$,  thus Player $X_{\alpha}$ plays Bottom and $x_{\alpha}=0$. Since $x \in [0,1]$, this contradicts $x_{\alpha}>x$. Similarly if $x_{\alpha}<x$, then $x_{\beta}=0$ hence $x_{\alpha} =1$, a contradiction.
\end{proof}

\begin{lem}\label{basicbis}
Let $i \in \{1,..., n\}$. Consider a game with $n$ basic players $X_1$,...,$X_n$, and (at least) $2q$ additional players $X_{ik}$, $Y_{ik}$, $0 \leq k \leq q-1$, whose payoffs are given by the following tables (where, as in all subsequent payoff tables, $T$ and $B$ correspond to the actions of the player whose payoff is given in the table). 
\begin{equation}
\label{eq:xik}
\mbox{Player }  X_{ik} \quad 
\begin{array}{c|c} 
T  & \begin{array}{c} x_i \prod_{j=0}^{k-1} x_{ij} \vspace{0.1cm} \end{array} \\ 
\hline
B & y_{ik} 
\end{array} 
\qquad \quad
\mbox{Player } Y_{ik} \quad 
\begin{array}{c|c} 
T  & x_{ik} \\ 
\hline
B & \begin{array}{c}  x_i \prod_{j=0}^{k-1} x_{ij} \end{array}
\end{array} 
\end{equation}
In any equilibrium, for any $k$ in $\{0,...,q-1\}$, $x_{ik} = (x_i)^{2^k}$.
\end{lem}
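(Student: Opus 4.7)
My plan is to reduce to Lemma \ref{basic} applied to each pair $(X_{ik}, Y_{ik})$ separately. Observe that the payoff tables in \eqref{eq:xik} have exactly the same shape as those in \eqref{eq:lm1}, except that the constant $x$ in \eqref{eq:lm1} is replaced by the quantity $\xi_k := x_i \prod_{j=0}^{k-1} x_{ij}$, whose value now depends on the strategies of other players. I plan to run the argument of Lemma \ref{basic} essentially verbatim, with $\xi_k$ playing the role of $x$: fixing an equilibrium and supposing for contradiction that $x_{ik} > \xi_k$, player $Y_{ik}$ strictly prefers Top so $y_{ik} = 1$; then the comparison for $X_{ik}$ becomes $\xi_k$ versus $1$, which forces $x_{ik} = 0$ (the boundary case $\xi_k = 1$ is ruled out immediately because $x_{ik} \leq 1$ would then contradict $x_{ik} > \xi_k$), contradicting the assumption. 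The mirror-image case $x_{ik} < \xi_k$ is handled symmetrically. Hence in any equilibrium one has $x_{ik} = \xi_k = x_i \prod_{j=0}^{k-1} x_{ij}$.

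Once this identity holds at equilibrium for every $k \in \{0,\dots,q-1\}$, I would conclude by induction on $k$. For $k=0$, the empty product convention gives $\xi_0 = x_i$, hence $x_{i0} = x_i = (x_i)^{2^0}$. For the inductive step, assuming $x_{ij} = (x_i)^{2^j}$ for every $j < k$, one computes
\[
x_{ik} \;=\; x_i \prod_{j=0}^{k-1} x_{ij} \;=\; x_i \prod_{j=0}^{k-1} (x_i)^{2^j} \;=\; (x_i)^{1 + \sum_{j=0}^{k-1} 2^j} \;=\; (x_i)^{2^k},
\]
using the standard identity $\sum_{j=0}^{k-1} 2^j = 2^k - 1$. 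This completes the induction.

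The only point that requires slight care is that in Lemma \ref{basic} the constant $x$ was truly a constant, whereas here $\xi_k$ depends on the equilibrium profile via $x_i, x_{i0}, \dots, x_{i,k-1}$. This causes no real obstacle because the argument only uses the numerical value of $\xi_k$ at the chosen equilibrium together with $\xi_k \in [0,1]$; no continuity or monotonicity in the other players' strategies is ever invoked. I therefore expect the proof to be almost mechanical, with the induction step being the only substantive computation.
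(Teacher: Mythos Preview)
Your proposal is correct and takes essentially the same approach as the paper: both reduce each pair $(X_{ik},Y_{ik})$ to Lemma~\ref{basic} and then induct to obtain $x_{ik}=(x_i)^{2^k}$. The only cosmetic difference is that the paper phrases the induction on $q$ while you induct on $k$, and you are more explicit than the paper about why the argument of Lemma~\ref{basic} still applies when the ``constant'' $x$ is the equilibrium value $\xi_k = x_i\prod_{j<k} x_{ij}$ determined by other players' strategies.
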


\begin{proof} The proof is by induction on $q$. The case $q=1$ is Lemma \ref{basic} applied to players $X_{i0}$ and $Y_{i0}$ with 
$x_{\alpha}=x_{i0}$, $x_{\beta}= y_{i0}$, and $x=x_i$. Assume the result is true when adding $2(q-1)$ players. 
To show that the result is still true when adding $2q$ players, it suffices to show that we then have in equilibrium
$x_{i(q-1)}=(x_i)^{2^{q-1}}$. But letting $k=q-1$, the induction hypothesis and Lemma \ref{basic} applied to players $X_{ik}$ and $Y_{ik}$ with 
$x_{\alpha}= x_{ik}$, $x_{\beta}=y_{ik}$ and $x= x_i \prod_{j=0}^{k-1} x_{ij}$ show that in any equilibrium 
\[x_{ik}= x_i \prod_{j=0}^{k-1} x_{ij} = x_i \prod_{j=0}^{k-1} (x_{i})^{2^j}= (x_i)^{2^{k}}\] as required.
\end{proof}

The following basic lemma transforms a polynomial in $x_1$,..., $x_n$ into a multiaffine map in the $(x_i)^{2^k}$, $i=1$ to $n$, $k=0$ to $q-1$. Due to the previous lemma, this will allow us, for any polynomial $P_{ab}$, to add a player whose payoff at equilibrium when playing Top is equal to $P_{ab}(x_1,...,x_n)$.

\begin{lem}\label{lm:fab}
Let $P$ be a polynomial in the variables $x_1,\cdots,x_n$, with degree strictly less than $2^{q}$ in each variable. Then there exists a multiaffine map $f : \mathbb{R}^{nq} \to \mathbb{R}$ 
such that for every $(x_1,\cdots,x_n)\in \mathbb{R}^n$ $$P(x_1,\cdots,x_n)=f(x_1^{2^0},..., x_1^{2^{q-1}}, ..., x_n^{2^0},...,  x_n^{2^{q-1}}).$$ 
\end{lem}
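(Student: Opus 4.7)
The plan is to expand $P$ as a linear combination of monomials and then exploit the binary expansion of each exponent. More precisely, I would write
$$P(x_1,\ldots,x_n) = \sum_{\alpha} c_\alpha \prod_{i=1}^n x_i^{\alpha_i},$$
where the sum runs over multi-indices $\alpha=(\alpha_1,\ldots,\alpha_n)$ with $0\le \alpha_i < 2^q$ and $c_\alpha\in\mathbb{R}$.

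For each $\alpha_i$, I would take its unique binary expansion $\alpha_i=\sum_{k=0}^{q-1}\epsilon_{ik}(\alpha)\,2^k$ with $\epsilon_{ik}(\alpha)\in\{0,1\}$. This gives the key rewriting
$$x_i^{\alpha_i} = \prod_{k=0}^{q-1} \bigl(x_i^{2^k}\bigr)^{\epsilon_{ik}(\alpha)},$$
so that each monomial of $P$ becomes a product in which every factor $x_i^{2^k}$ appears with exponent $0$ or $1$.

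I would then introduce formal variables $y_{ik}$ for $1\le i\le n$, $0\le k\le q-1$, and define
$$f(y_{1,0},\ldots,y_{n,q-1}) := \sum_\alpha c_\alpha \prod_{i=1}^n \prod_{k=0}^{q-1} y_{ik}^{\epsilon_{ik}(\alpha)}.$$
By construction each summand is squarefree in the $y_{ik}$, hence affine in every single variable $y_{ik}$; since a sum of multiaffine maps is multiaffine, $f$ is multiaffine on $\mathbb{R}^{nq}$. Substituting $y_{ik}=x_i^{2^k}$ recovers $P(x_1,\ldots,x_n)$, which is the required identity.

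There is no real obstacle here: the core content is just the uniqueness of binary expansions of integers in $[0,2^q)$, which turns each univariate power of degree $<2^q$ into a squarefree monomial in the auxiliary variables $x_i^{2^0},\ldots,x_i^{2^{q-1}}$. The bound $2^q$ in the statement is tight precisely because squarefree monomials in $q$ variables can represent exactly the exponents $0,1,\ldots,2^q-1$.
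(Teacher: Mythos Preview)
Your proof is correct and follows essentially the same approach as the paper: both use the binary expansion of each exponent $\alpha_i<2^q$ to rewrite $x_i^{\alpha_i}$ as a squarefree product of the $x_i^{2^k}$, and then observe that the resulting expression is affine in each of these new variables. Your version is slightly more explicit in introducing the formal variables $y_{ik}$, but the argument is identical.
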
 
\begin{proof} Recall that any positive integer $m$ may be written as a sum of powers of $2$ (this is the binary development of $m$). 
Thus if $m < 2^{q}$, there exist numbers $\eps_0(m)$,..., $\eps_{q}(m)$ in $\{0, 1\}$ such that $m=\sum_{k=0}^{q} \eps_k (m) 2^k$ hence \[x_i^m= \prod_{k=0}^{q-1}(x_i^{2^k})^{\eps_k (m)}.\]
Thus if we replace any $x_i^m$ that appears in the polynomial $P$ by $\prod_{k=0}^{q-1} (x_i^{2^k})^{\eps_k}$, and denote the resulting expression by $f(x_1^{2^0},..., x_1^{2^{q-1}}, ..., x_n^{2^0},...,  x_n^{q-1})$, this defines a map $f: \mathbb{R}^{nq} \to \mathbb{R}$ such that $P(x_1,\cdots,x_n)=f(x_1^{2^0},..., x_1^{2^{q-1}}, ..., x_n^{2^0},...,  x_n^{q-1})$ for every $(x_1,\cdots,x_n)\in \mathbb{R}^n$. Since every $\eps_k (m)$ is either 0 or 1 the map $f$ is affine in each variable.
\end{proof} 


\noindent \textbf{Proof of Proposition \ref{prop:main2}. } 
Let $d < 2^q$. Start with a game with $n$ basic players $X_1$,..., $X_n$, 
and, for each $i \in \{1,...,n\}$, $2q$ additional players with payoffs \eqref{eq:xik}. Denote $\hat{x}=(x_{10},..., x_{1(q-1)}, ..., x_{n0},..., x_{n(q-1)})$. For any polynomial $P_{ab}$, define $f_{ab}$ as in Lemma \ref{lm:fab}. Remark that by Lemmas \ref{basicbis} and \ref{lm:fab}, at equilibrium $P_{ab}(x_1,\cdots,x_n)=f_{ab}(\hat{x})$. Since $f_{ab}$ is multiaffine, we may add a player $S_{ab}$, playing $T$ with probability $s_{ab}$, and with payoff 
\[
\mbox{Player $S_{ab}$ }\quad \begin{array}{c|c} 
T  &  f_{ab}(\hat{x}) \\ 
\hline
B & 0
\end{array} 
\]
Add also a player $U$, playing Top with probability $u$, and with payoff
\[
\mbox{Player $U$ }\quad \begin{array}{c|c} 
T  &  \begin{array}{c} \prod_{a=1}^A\sum_{b=1}^B s_{ab} \vspace{0.1cm}\\\end{array} \\ 
\hline
B & 0
\end{array} 
\]
Finally, let $z=(z_1,\cdots,z_n)$ be any element of $E$ and give to the initial $n$ players (players $X_i$) the payoffs 
\[
\mbox{Player $X_i$ }\quad \begin{array}{c|c} 
T  &  (z_i-x_{i 0})u \\ 
\hline
B & 0
\end{array} 
\]
Thus the game has (1+AB) additional players, plus $2(1+ \ln_2(d))$ for each player to construct the $X_i$, hence
\[N\leq 1+AB + n(3+ 2\ln_2(d)).\] 
More precisely, if $d(i)$ is the maximal degree in $x_i$ of the polynomials $P_{ab}$, then we need at most $N= 1+AB + 3n+ 2\sum_i \ln_2(d(i))$ players.
 
Let  $x=(x_1,\cdots,x_n)$ be the $n$ first coordinates of an equilibrium and assume $x\notin E$. At equilibrium, $f_{ab}(\hat{x})=P_{ab}(x)$. Moreover,  since $x\notin E$, for each $a$, there is a $b$ such that $P_{ab}(x)>0$ and thus $s_{ab}=1$. Hence 
\[\prod_{a=1}^A\sum_{b=1}^B s_{ab}>0\]  
and $u=1$ as well. Since $z\in E$, there must be an $i$ such that $x_i\neq z_i$. Recall that in any equilibrium $x_i=x_{i 0}$. However, $x_{i0 }< z_i $ implies $(z_i-x_{i 0})u>0$ and $x_i=1\geq z_i $, while  $x_{i0} > z_i $ implies $(z_i-x_{i 0})u<0$ and $x_i=0\leq z_i $. We get a contradiction in either case.

Let now $x$ be in $E$. Consider the following profile : $x_{ik}=y_{ik}=(x_i)^{2^k}$ for all $i$ and $k$, $u=0$, $s_{ab}=0$ if $P_{ab}(x)\leq 0$ and 1 otherwise. All players $X_{ik}$ and $Y_{ik}$ are indifferent, and players $X_i$ as well since $u=0$. Players $s_{ab}$ have no profitable deviations by construction. Finally, since $x\in E$, there exists $a$ such that for every $b$, $s_{ab}=0$, hence 
$$
\prod_{a=1}^A\sum_{b=1}^B s_{ab}=0$$ and Player $U$ is indifferent.\\

\begin{remark}
Any compact semi-algebraic set $E$ may be written as in (\ref{eq;semialg}), but it may be that $E$ is naturally described in some other way, and that putting it in form (\ref{eq;semialg}) is computationally expensive.  
Thus, it is interesting to note that if $E=\cap_{a=1}^A\cup_{b=1}^B \{x \in \mathbb{R}^n,\ P_{ab}(x)\leq0\}$, exactly the same proof would work, replacing the payoff of Player $U$ by
\[
\begin{array}{c|c} 
T  &  \begin{array}{c} \sum_{a=1}^A\prod_{b=1}^B s_{ab} \vspace{0.1cm}\\\end{array} \\ 
\hline
B & 0
\end{array} 
\]

More generally, assume $E$ is defined by unions and intersections (in any order) of $C$ sets $E_c=\{x,\ P_{c}(x)\leq0\}$. In this definition, replace any $E_c$ by $p_c$, any $\cap$ by a $+$, any $\cup$ by a $*$, and denote by $Q(p_1,\cdots,p_C)$ the resulting expression. Then the same proof, replacing the payoff of Player $U$ by
\[
\begin{array}{c|c} 
T  &  \begin{array}{c} Q(p_1,\cdots,p_C) \vspace{0.05cm}\\\end{array} \\ 
\hline
B & 0
\end{array} 
\]
shows that  there exists an $N$-player game with 2 actions for each player and $N\leq 1+C + n(3+2\ln_2(d))$ such that the projection of the set of its equilibria on the first $n$ coordinates is precisely $E$.
\end{remark}


\noindent \textbf{Proof of Proposition \ref{prop:main1}}
Remark that in our construction, at any equilibrium, the payoff of Player $Y_{i0}$ is $x_i$. Hence if $E\in[0,1]^n$ our previous construction works (considering players $Y_{10},..., Y_{n0}$ as the first $n$ players of the game). For a general $E \in [-D,D]^n$, with $D> 0$, apply the construction to 
\[E':=\{x \in \mathbb{R}^n \, | -D+2Dx\in E\}\subset[0,1]^n,\] replacing the payoff of Player $Y_{i0}$ by  the strategically equivalent payoff:
%
\[\begin{array}{c|c} 
T  &  \begin{array}{c} -D+2Dx_{i0}  \vspace{0.05cm}\\ \end{array} \\ 
\hline
B & -D+2Dx_i
\end{array} 
\]
\section{Comparison to results in \cite{LE15}}
\label{sec:comp}
We compare here the present paper to \cite{LE15}  which was written independently by Yehuda John Levy. Briefly speaking, Proposition \ref{prop:main2} is less general but the proof is constructive and gives an explicit bound on the number of additional players. More precisely, Yehuda John Levy obtain three different kind of results related to Proposition \ref{prop:main2}, that we give with increasing order of generality.

a) Our Proposition \ref{prop:main2} is ``Theorem 3.1 with binary players" in his paper. His proof is fundamentally different from ours, relying on the triangulation of semi-algebraic sets, and does not provide bounds.

b) A slight generalization is to drop the condition that the original players are binary (but still requiring the additional players to be): this is his Theorem 3.1. His proof relies on the previous binary case and an additional Lemma 4.15. He also remarks that the bounds in our Proposition \ref{prop:main2}, combined with his Lemma 4.15, yield bounds in this more general case. 

For the sake of completeness we point out that our explicit construction extends directly to that case, yielding a very slightly better bound that the one obtained in \cite{LE15}. Consider n finite sets $\mathscr{T}_i$ of cardinal $t_i$, a nonempty closed semi-algebraic set $E$ in $\Pi_{i=1}^n\Delta(\mathscr{T}_i)$, and denote by $x_{i,j}$ the probability that Player $X_i$ assigns to its $j$th pure action $\tau_j^i$. As in the proof of Proposition \ref{prop:main2}, add binary players $X_{i,j,k}$ that will play $T$ with probability $x_{i,j}^{2^k}$ at equilibrium, add players $S_{ab}$ and $U$ according to the definition of $E$, and choose an arbitrary $z$ in $E$. The payoff of player $X_i$ is now defined as $(z_{i,j}-x_{i,j,0})u$ when playing pure action $\tau_j^i$ and all the elements of the proof still hold (note that at any equilibrium in which $u=1$, $z_{i,j}<x_{i,j}=x_{i,j,0}$ would imply $x_{i,j}=0$, a contradiction). This gives a bound of $1+AB+n+2(1+\ln_2(d))\sum_i t_i$ for the number of players. Actually, using $\sum_j x_{i,j}=1$, one can replace $t_i$ by $t_i-1$ in this formula, recovering the bound $N\leq 1+AB + n(3+2\ln_2(d))$ when $t_i=2$ for all $i$. 

c) Levy's Theorems 3.2 and 3.3 are far more general, dealing with representations of semi-algebraic functions or correspondences. It does not seem that our techniques can be applied in this more general context.

We also point out that Yehuda John Levy obtains interesting results in two other directions : when the additional players are not restricted to be binary (Theorem 5.5), and when there are countably many original players (Theorems 5.3 and 5.4).
\section{Extensions}
 \label{sec:ext}
In the previous construction, if all $P_{ab}$ have coefficients in $\mathds{Q}$, the constructed game may not have all pure profile payoffs in $\mathds{Q}$, since there is no reason one can find $z\in E\cap \mathds{Q}^n$. In this section we show that it is however possible to construct such a game, at the cost of additional players. The natural idea is to choose a $z\in E$ whose coordinates $z_i$ are algebraic, with minimal polynomial $R_i\in \mathds{Q}[X]$, and to change the payoff of Player $X_i$ to  
\[\begin{array}{c|c} 
T  &  \begin{array}{c} f_i(\hat{x}) u  \vspace{0.05cm}\\ \end{array} \\ 
\hline
B & 0
\end{array} 
\]
where the $f_i$ are defined from the $R_i$ as in Lemma \ref{lm:fab} ; that is, such that $f_i(\hat{x})= R_i(x_i)$. There are however two difficulties to overcome:

- firstly, if we want to give a bound on the number of additional players, we need to know the degree of the $R_i$, which may be considerably larger than $d$. This is given by Lemma \ref{lem:algbound} below.

- secondly, it may well be that the $z_i$ have algebraic conjugates in $[0,1]$ that are not coordinates of elements of $E$. Thus $f_i(\hat{x})=0$ (the analog of $z_i = x_{i0}$) would imply $R_i(x_i)=R_i(z_i)=0$ but not $x_i=z_i$, hence would not lead to a contradiction.  We thus need to translate those variables in intervals in which the $z_i$ are the only zeroes of $R_i$. This will be done by means of additional players $V_i$.

We will need the two following technical lemmas to provide a precise bound on the number of additional players. The first one in a particular case of Algorithm 14.16 in \cite{BPR11}, while the second one is Theorem (B) in \cite{BM04}. Beware that in Lemma \ref{lem:algbound}, $d$ is the degree in each variable, while in the result we use (Algorithm 14.16 in \cite{BPR11}), it is the total degree.
\begin{lem}  \label{lem:algbound}
There exists a universal constant $K$ such that, for every nonempty semi-algebraic set $E=\cup_{a=1}^A\cap_{b=1}^B \{x \in \mathbb{R}^n,\ P_{ab}(x)\leq0\} \subset [0, 1]^n$ where each $P_{ab}\in\mathds{Z}[x_1,\cdots,x_n]$, is of degree at most $d \geq 2$ in each variable, and with coefficients bounded by $M$, there exists $z\in E$ such that all coordinates $z_i$ are zeroes of some $R_i\in\mathds{Z}[X]$ with degree less than $(nd)^{Kn}$ and coefficients less than  $(M+1)^{(nd)^{Kn}}$.
\end{lem}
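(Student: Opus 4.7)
The plan is to reduce the statement to a direct invocation of the sampling algorithm of \cite{BPR11} (Algorithm 14.16), which produces sample points meeting every realizable sign condition of a finite family of polynomials, together with explicit bounds on the algebraic complexity of these points.

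First, since $E$ is nonempty, one can fix $a_0 \in \{1,\ldots,A\}$ such that $E_{a_0} := \bigcap_{b=1}^B \{x \in [0,1]^n : P_{a_0 b}(x) \leq 0\}$ is nonempty; it then suffices to exhibit a point $z \in E_{a_0}$ with the desired properties. Apply the sampling algorithm to the polynomial family formed by $P_{a_0 1}, \ldots, P_{a_0 B}$ together with the $2n$ polynomials $x_i$ and $1 - x_i$, which encode the box constraint $x \in [0,1]^n$. Each polynomial in this family has total degree at most $nd$ (since the degree in each variable is at most $d$) and integer coefficients of absolute value at most $\max(M,1)$, i.e.\ bit-length at most $\log_2(M+1)$.

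The algorithm produces a finite set of points hitting every nonempty realizable sign condition on this family; in particular, at least one of these points, call it $z$, satisfies $P_{a_0 b}(z) \leq 0$ for all $b$ and $z \in [0,1]^n$, so $z \in E_{a_0} \subset E$. Moreover, each coordinate $z_i$ of $z$ is supplied as an algebraic number via a univariate representation, which provides an explicit $R_i \in \mathds{Z}[X]$ vanishing at $z_i$ whose degree and coefficient-size are controlled by the input data.

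The rest of the proof is pure bookkeeping on the complexity estimates of \cite{BPR11} (Theorem 14.17 or the corresponding complexity analysis of Algorithm 14.16): the degree of $R_i$ is bounded by $(nd)^{O(n)}$, and the bit-length of its coefficients by $\log_2(M+1)\cdot (nd)^{O(n)}$, with absolute implicit constants independent of $n$, $d$ and $M$. Exponentiating the bit-length bound gives coefficients of absolute value at most $(M+1)^{(nd)^{O(n)}}$. Choosing a universal constant $K$ large enough to absorb all these absolute constants (using $d \geq 2$) then yields the uniform bounds $\deg R_i < (nd)^{Kn}$ and coefficient bound $(M+1)^{(nd)^{Kn}}$ claimed in the statement. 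The main obstacle to expect is precisely checking that the implicit constants in BPR's bounds are genuinely universal, so that a single exponent $K$ absorbs them all; this is built into the formulation of the complexity estimates in \cite{BPR11}, where bounds are made explicit in terms of the total degree $nd$, the number of variables $n$, and the coefficient bit-size.
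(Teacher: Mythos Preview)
Your argument is correct and is exactly the route the paper takes: the paper states without further proof that Lemma~\ref{lem:algbound} is a particular case of Algorithm~14.16 in \cite{BPR11}, and your write-up simply makes that citation explicit by spelling out how the sampling algorithm is invoked (on one nonempty piece $E_{a_0}$ together with the box constraints) and how the degree and bit-size bounds translate into the stated estimates.
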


\begin{lem}\label{lem:diffbound}
Let $P\in \mathds{Z}[X]$ be a polynomial with no multiple roots, with degree $d\geq 2$ and coefficients bounded by $M$. Then the minimal distance between two roots is greater than $\sqrt{3}(d+1)^{-\frac{2d+1}{2}}M^{1-d}> d^{-2d}M^{-d}$.
\end{lem}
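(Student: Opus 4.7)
The plan is to use the classical discriminant argument underlying Mahler's 1964 separation bound. Write $P(X) = a_0 \prod_{i=1}^d (X - \alpha_i)$ with $\alpha_i \in \mathbb{C}$ and $a_0 \in \mathds{Z} \setminus \{0\}$. Since $P$ is squarefree, its discriminant
\[
\Delta(P) = a_0^{2d-2} \prod_{1 \leq i < j \leq d} (\alpha_i - \alpha_j)^2
\]
is a nonzero integer, hence $|\Delta(P)| \geq 1$. Relabelling so that $|\alpha_1 - \alpha_2|$ realises the minimal separation $\delta$, and isolating that pair in the product, one obtains $\delta^2 \cdot R \geq 1/a_0^{2d-2}$ with $R = \prod_{(i,j) \neq (1,2),\, i<j} (\alpha_i - \alpha_j)^2$. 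The task reduces to bounding $R$ sharply from above.

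The sharp upper bound on $R$ comes from Hadamard's inequality applied to a modified Vandermonde matrix. Starting from $V = (\alpha_i^{j-1})_{1 \leq i, j \leq d}$, whose determinant squared equals $|\Delta(P)|/a_0^{2d-2}$, I would replace its second row by the divided-difference row $\bigl((\alpha_2^{j-1} - \alpha_1^{j-1})/(\alpha_2 - \alpha_1)\bigr)_{j}$, which factors $(\alpha_2 - \alpha_1)$ out of the determinant. Applying Hadamard to this modified matrix and bounding each Euclidean row norm in terms of $\max(1, |\alpha_i|)^{d-1}$ (using a geometric-sum identity for the divided-difference row) yields, once the product is reorganised through the Mahler measure $M(P) = |a_0| \prod_i \max(1, |\alpha_i|)$, the classical bound
\[
\mathrm{sep}(P) > \sqrt{3} \, d^{-(d+2)/2} \, M(P)^{1-d}.
\]

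To replace $M(P)$ by the coefficient bound $M$ of the statement, I would invoke Landau's inequality $M(P) \leq \|P\|_2 \leq \sqrt{d+1}\, M$; since $1 - d < 0$, this yields $M(P)^{1-d} \geq (d+1)^{(1-d)/2} M^{1-d}$. Combining with the trivial $d^{-(d+2)/2} \geq (d+1)^{-(d+2)/2}$ gives the first claimed bound $\sqrt{3}(d+1)^{-(2d+1)/2} M^{1-d}$. For the second inequality, since $P$ is a nonzero integer polynomial at least one coefficient has absolute value at least $1$, so $M \geq 1$ and $M^{1-d} \geq M^{-d}$; the residual numerical claim $\sqrt{3}\, d^{2d} \geq (d+1)^{(2d+1)/2}$ for $d \geq 2$ is then straightforward by taking logarithms.

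The main obstacle is the Hadamard estimate on the modified matrix: the exponent $d^{-(d+2)/2}$ is sharp and genuinely requires the divided-difference trick, because the naive bound $|\alpha_i - \alpha_j| \leq 2\max(1,|\alpha_i|)\max(1,|\alpha_j|)$ only produces constants of the form $2^{O(d^2)}$ instead of $d^{-O(d)}$, which would be too weak to feed into Lemma \ref{lem:algbound} and yield the intended root separation.
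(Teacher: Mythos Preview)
The paper does not give its own proof of this lemma: it simply quotes the result as Theorem~(B) of \cite{BM04}. Your proposal is therefore not being compared against an in-paper argument but against a bare citation.

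That said, what you have written is precisely the classical Mahler--Mignotte derivation that underlies the cited result, and it is correct. The discriminant lower bound $|\Delta(P)|\geq 1$, the divided-difference row in the Vandermonde to extract the factor $(\alpha_2-\alpha_1)$, Hadamard's inequality reorganised through the Mahler measure to reach $\sqrt{3}\,d^{-(d+2)/2}M(P)^{1-d}$, and finally Landau's inequality $M(P)\leq\|P\|_2\leq\sqrt{d+1}\,M$ to pass from Mahler measure to height, together yield exactly $\sqrt{3}(d+1)^{-(2d+1)/2}M^{1-d}$. The second, cruder inequality then follows from $M\geq 1$ and the elementary check $\sqrt{3}\,d^{2d}>(d+1)^{(2d+1)/2}$ for $d\geq 2$, as you say. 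In short: your sketch supplies the proof that the paper chose to outsource, and does so by the standard route.
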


We can now prove :

\begin{proposition} 
Let $d \geq 2$. There exists a universal constant $K'$ such that, for every nonempty semi-algebraic set $E=\cup_{a=1}^A\cap_{b=1}^B \{x \in \mathbb{R}^n,\ P_{ab}(x)\leq0\}$ in $[0,1]^n$, where each $P_{ab}\in\mathds{Z}[x_1,\cdots,x_n]$, is of degree at most $d$ in each variable, and with coefficients bounded by $M$, there exists a binary $N$-player game with integer pure payoffs bounded by $(nd(M+1))^{(nd)^{K'n}}$, and $N\leq 1+AB + K'n^2\ln_2(nd)$ such that the projection of the set of its equilibria on the first $n$ coordinates is precisely $E$.
\end{proposition}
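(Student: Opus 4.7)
The plan is to follow the construction of Proposition~\ref{prop:main2}, but to replace the linear payoff $(z_i-x_{i,0})u$ of player $X_i$ by a polynomial expression built from an integer polynomial $R_i$ with $R_i(z_i)=0$, and to introduce additional gadget players $V_i,W_i$ that confine $x_i$ to a rational interval on which $z_i$ is the unique root of $R_i$. Apply Lemma~\ref{lem:algbound} to pick $z=(z_1,\ldots,z_n)\in E$ with $z_i$ a root of some $R_i\in\mathds{Z}[X]$ of degree $d_i\leq D:=(nd)^{Kn}$ and coefficients bounded by $M':=(M+1)^{D}$. Replacing $R_i$ by its squarefree part (which, by Mignotte's bound, preserves the above bounds up to a polynomial factor) and flipping a sign if needed, we may assume $R_i$ is squarefree and strictly increasing at $z_i$. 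Apply Lemma~\ref{lem:diffbound} to get a root separation $\delta_i\geq d_i^{-2d_i}(M')^{-d_i}$, and select dyadic rationals $\ell_i,r_i\in(0,1)$ with $\ell_i<z_i<r_i$, $r_i-\ell_i<\delta_i$ and common denominator $e_i\leq 4/\delta_i$, so that $z_i$ is the unique root of $R_i$ in $[\ell_i,r_i]$.

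Introduce two players per coordinate,
\[
V_i:\quad \begin{array}{c|c} T & x_i \\ \hline B & \ell_i \end{array}, \qquad W_i:\quad \begin{array}{c|c} T & r_i \\ \hline B & x_i \end{array},
\]
whose equilibrium conditions yield $v_i=1$ when $x_i>\ell_i$ and $w_i=1$ when $x_i<r_i$ (with indifference only at $x_i\in\{\ell_i,r_i\}$). Set $q=\lceil\log_2(D+1)\rceil$ and apply Lemmas~\ref{basicbis} and~\ref{lm:fab} to obtain a multiaffine $f_i$ such that $f_i(\hat{x})=R_i(x_i)$ at equilibrium. Replace the payoff of $X_i$ by
\[
\text{Top: } u\bigl[L(1-v_i)-L(1-w_i)-f_i(\hat{x})\,v_iw_i\bigr], \qquad \text{Bottom: } 0,
\]
where $L\in\mathds{Z}_{>0}$ is chosen strictly larger than $\sup_{[0,1]}|R_i|\leq(d_i+1)M'$ and distinct from $|R_i(\ell_i)|$ and $R_i(r_i)$. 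This expression is multiaffine, and integer coefficients are obtained by scaling the payoffs of $V_i,W_i$ by $e_i$. Boundary cases $z_i\in\{0,1\}$ are handled by omitting the corresponding $V_i$ or $W_i$ (formally setting its probability to $1$).

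The equilibrium analysis mirrors that of Proposition~\ref{prop:main2}. If $(x_1,\ldots,x_n)\notin E$ at an equilibrium, then $u=1$ as before, and a direct case analysis shows: for $x_i\notin[\ell_i,r_i]$, the equilibrium values of $v_i,w_i$ make Top's payoff equal to $+L$ or $-L$, forcing $x_i\in\{0,1\}$ and contradicting $\ell_i,r_i\in(0,1)$; for $x_i\in\{\ell_i,r_i\}$, the indifference condition for $X_i$ would force $v_i$ or $w_i$ outside $[0,1]$ by the choice of $L$; hence $x_i\in(\ell_i,r_i)$, indifference for $X_i$ gives $R_i(x_i)=0$, and thus $x_i=z_i$, yielding $x=z\in E$, a contradiction. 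Conversely, each $x\in E$ extends to an equilibrium exactly as in the proof of Proposition~\ref{prop:main2}, with $v_i,w_i$ as best replies to $x_i$. Counting players: $N=1+AB+3n+2nq\leq 1+AB+K'n^2\log_2(nd)$. For payoffs, scaling $V_i,W_i$ by $e_i\leq 4/\delta_i\leq 4D^{2D}(M')^{D}$ produces integer values bounded by this quantity, while the other payoffs are bounded by $O(L)\leq 2DM'$; both fit in $(nd(M+1))^{(nd)^{K'n}}$ for a suitably large constant $K'$. The main obstacle is carefully controlling the border equilibria $x_i\in\{\ell_i,r_i\}$ and combining the doubly-exponential bounds from Lemmas~\ref{lem:algbound} and~\ref{lem:diffbound} into the stated form.
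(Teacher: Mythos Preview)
Your approach is correct and genuinely different from the paper's. Both proofs invoke Lemmas~\ref{lem:algbound} and~\ref{lem:diffbound} to select an algebraic point $z\in E$ and to isolate each $z_i$ as the unique root of $R_i$ in a short rational interval, but the game-theoretic implementations diverge. The paper introduces an \emph{auxiliary} variable $v_i$ for each $i$, rescales $R_i$ into an integer polynomial $Q_i$ whose unique zero in $[0,1]$ is $M_2 z_i-\alpha_i$, equips $v_i$ with its own tower of power-gadgets $V_{ik},W_{ik}$ so that $Q_i(v_i)=0$ is forced when $u=1$, and then uses a Lemma~\ref{basic}-type pair $(X_i,Y_i)$ to impose $x_i=(v_i+\alpha_i)/M_2=z_i$. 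You instead keep the computation on $x_i$ itself: you enlarge $q$ so that the existing $X_{ik}$ tower can evaluate $R_i(x_i)$ multiaffinely, and you add two simple ``window'' players $V_i,W_i$ whose best replies restrict $x_i$ to $[\ell_i,r_i]$; the payoff $u\bigl[L(1-v_i)-L(1-w_i)-R_i(x_i)v_iw_i\bigr]$ then forces $R_i(x_i)=0$ inside the window. Your route avoids a second tower of power-gadgets and the extra $Y_i$ players, at the price of a slightly more delicate boundary analysis at $x_i\in\{\ell_i,r_i\}$; the paper's route keeps the role of $x_i$ and the root-finding cleanly separated. Both give the same leading term $O(n^2\log_2(nd))$ in the player count.

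Two small points to tighten: the squarefree reduction via Mignotte multiplies the coefficient bound by a factor exponential in the degree (roughly $2^{D}$), not polynomial, though this is harmlessly absorbed into $(nd(M+1))^{(nd)^{K'n}}$; and the estimate $e_i\le 4/\delta_i$ tacitly assumes $z_i$ is not too close to $0$ or $1$, so you should also bound $\min(z_i,1-z_i)$ from below (e.g.\ by the standard lower bound $1/(1+M')$ on nonzero roots), which again fits inside the stated payoff bound.
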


\begin{proof}
Let $z=(z_1,\cdots,z_n)\in E$ such that every $z_i$ is algebraic. Let $R_i\in\mathds{Z}[X]$ be the minimal polynomial of $z_i$. By Lemma \ref{lem:algbound}, we may choose $z$ such that  $R_i$ is of degree less than $d_1=(nd)^{Kn}$ and with coefficients less than  $M_1=(M+1)^{(nd)^{Kn}}=(M+1)^{d_1}$. Define $M_2=(nd(M+1))^{(nd)^{2Kn}}= (nd(M+1))^{d_1^2} \geq d_1^{2d_1}M_1^{d_1}$ (the inequality holds provided $K \geq 1$, which we may assume without loss of generality).
Since $R_i$ is minimal it has no multiple roots hence by Lemma \ref{lem:diffbound} there exists an nonnegative integer $\alpha_i<M_2$ such that $z_i$ is the only zero of $R_i$ in $\left[\frac{\alpha_i}{M_2}, \frac{1+\alpha_i}{M_2}\right]$. Let $Q_i(x)= {M_2}^{d_1} R_i\left(\frac{x+\alpha_i}{M_2}\right)$. Then $Q_i$ has degree at most $d_1$, and its only zero in $[0,1]$ is $M_2 z_i-\alpha_i$. Moreover, for any $l\leq d$, the coefficient of $x^l$ is an integer and bounded by

\begin{eqnarray*}
M_2^{d_1}\sum_{j=l}^{d_1} \frac{M_1 \binom{j}{l} \alpha_i^{j-l}}{M_2^{j}}\leq M_1 M_2^{d_1} 2^{d_1} \leq (2(M+1))^{d_1} (nd(M+1))^{d_1^3} \leq  (nd(M+1))^{d_1 + d_1^3}:=M_3.
\end{eqnarray*}

\noindent
Finally, up to a change of sign, we may assume $Q_i(0)\geq 0$ and $Q_i(1)\leq 0$.


Define players $X_{ik}$, $S_{ab}$ and $U$ as in the proof of Proposition  \ref{prop:main2}. Consider now $n$ additional players $V_i$ that play Top with probability $v_i$ and with payoffs to be defined later on.
Denote by $D$ the integer part of $\ln_2(d_1)$ ; for any $i$ defines as in Lemma \ref{basicbis} some additional players $V_{ik}$ (playing a role with respect to the $v_i$ similar to the $X_{ik}$ with respect to the $x_i$) and $W_{ik}$ (playing a role similar to the $Y_{ik}$) for $k=0$ to $D$. Thus at equilibrium $v_{ik}=(v_i)^{2^k}$. For any  $Q_i$ one defines some multiaffine $f_i$ as in Lemma  \ref{lm:fab}. 
This allows us to define the payoff of player $V_i$ as
\[\begin{array}{c|c} 
T  &  \begin{array}{c} uf_i(v_{i0},\cdots,v_{iD}) \vspace{0.05cm}\\ \end{array} \\ 
\hline
B & 0
\end{array} 
\]


Finally, define the payoff of the $n$ original players $X_i$ as well as yet $n$ additional players $Y_i$ as 
\[
\mbox{Player }   X_i  \quad \begin{array}{c|c} 
T  &  \begin{array}{c} (1-u)x_{i0} + \displaystyle u\frac{v_i+\alpha_i}{M_2} \vspace{0.05cm}\\ \end{array} \\ 
\hline
B & y_i
\end{array} 
\qquad
\mbox{Player }   Y_i  \quad \begin{array}{c|c} 
T  &  \begin{array}{c} x_i \vspace{0.05cm}\\ \end{array} \\ 
\hline
B & (1-u)x_{i0} + \displaystyle  u\frac{v_i+\alpha_i}{M_2}
\end{array} 
\]

Let $x=(x_1,\cdots,x_n)$ be the $n$ first coordinates of an equilibrium and assume $x\notin E$. As in the proof of Proposition \ref{prop:main2}, one has $u=1$. Also, since we are at equilibrium Lemmas  \ref{basicbis} and \ref{lm:fab} imply that $Q_i(v_i)=f_i(v_{i0},\cdots,v_{iD})$.  Thus $Q_i(v_i)>0$ implies $v_i=1$ thus $Q_i(v_i)\leq 0$, while $Q_i(v_i)<0$ implies $v_i=0$ thus $Q_i(v_i)\geq 0$, in both cases a contradiction. Hence $Q_i(v_i)=0$ and $v_i=M_2 z_i-\alpha_i$. Since $u=1$, $(1-u)x_{i0}+u\frac{v_i+\alpha_i}{M_2}=z_i\in[0,1]$ , so applying Lemma \ref{basic} to the payoffs of players $X_i$ and $Y_i$ yields $x_i=z_i$ and $x=z\in E$, a contradiction.

Let now $x$ be in $E$ and consider the profile where $v_{ik}=w_{ik}=x_{ik}=y_{ik}=(x_i)^{2^k}$ for all $i$ and $k$, $v_i=y_i=x_i$, $u=0$, and $s_{ab}=0$ if $P_{ab}(x)\leq 0$ and 1 otherwise. Then all players are indifferent, except some $S_{ab}$ but those have no profitable deviation by construction.

The total number of players is less than $1+AB+n(5+2\ln_2(d))+2n(1+\ln_2(d_1))\leq 1+AB +11Kn^2\ln_2(nd)$.

The pure payoffs of all players except $X_i$ and $Y_i$ are integers; for players $X_i$ and $Y_i$ this is also the case provided one multiplies all their payoffs by $M_2$ (which gives a strategically equivalent game).
Clearly the only players which may have a large payoff are the $V_i$. Their payoffs are defined as multiaffine functions with $d_2=(1+\ln_2(d_1))$ variables in $[0,1]$ and coefficients smaller than $M_3$. In the payoff there is thus an addition of at most $2^{d_2}$ terms each less than $M_3$. The maximal payoff is thus less than \[2^{d_2} M_3\leq (nd(M+1))^{d_1} M_3 = (nd(M+1))^{2d_1+d_1^3}  \leq (nd(M+1))^{d_1^4}= (nd(M+1))^{(nd)^{4Kn}}.\]
\end{proof}

\section{Remarks}
\label{sec:rk}
\subsection{Optimality of the construction}
The bound we obtained in Section 2 on the number of additional players is almost optimal in $n$ and $d$. Precisely, we obtained a bound in $n\ln d$ and it is not possible to do better than  $\frac{n\ln d}{\ln(n\ln d)}$.
Indeed, recall the following bound on the number of connected components of a semi algebraic set \cite{CO02} :

\begin{lem}
Let $E$ be a semi-algebraic set defined by unions and intersections of $s$ polynomial inequalities in $r$ variables of total degree $D\geq 2$. Then its number of connected components is less than $(2D-1)D^{r+s-1}$.
\end{lem}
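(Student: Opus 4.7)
The plan is to reduce the counting of connected components of a general semi-algebraic Boolean combination of inequalities to the Oleinik--Petrovsky--Thom--Milnor bound for real algebraic varieties, via the classical slack-variable trick for turning inequalities into equations.

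First I would dispose of the Boolean structure. The $s$ hypersurfaces $\{P_i = 0\}$ partition $\mathbb{R}^r$ into realizations $R(\sigma) = \{x : \mathrm{sgn}(P_i(x)) = \sigma_i\ \forall i\}$ indexed by $\sigma \in \{-1,0,+1\}^s$, and $E$ is a union of such cells. Hence its number of connected components is bounded by the total number of connected components over the realized cells, and it suffices to bound the latter. Next, I would lift each conjunction of sign conditions to an algebraic set in $\mathbb{R}^{r+s}$ by introducing slack variables $y_1,\ldots,y_s$ and writing $\pm P_i(x) + y_i^2 = 0$ (and analogous encodings for the equality case). Because $D \ge 2$, adding $y_i^2$ does not raise the total degree beyond $D$. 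The projection $\pi : \mathbb{R}^{r+s} \to \mathbb{R}^r$ is continuous and surjects each lifted algebraic set onto the corresponding cell, so $b_0$ of the cell is at most $b_0$ of the lift.

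Second, I would apply the Oleinik--Petrovsky--Thom--Milnor bound to the lifts, viewed as algebraic subvarieties of $\mathbb{R}^{r+s}$ cut out by polynomials of degree at most $D$. The standard Morse-theoretic argument (perturb so that the lift becomes a smooth complete intersection in a sufficiently large ball, then count critical points of a distance-like Morse function via Bezout applied to a polynomial of degree $D$ and a determinantal polynomial of degree $2D-1$) gives $b_0 \le (2D-1) D^{r+s-1}$. Summed over sign cells, a careful accounting (using that distinct cells share boundary pieces, so one does not lose an extra factor in $s$) yields the claimed estimate.

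The main obstacle is obtaining the sharp constant $(2D-1)D^{r+s-1}$ rather than a crude bound of the form $O((sD)^{r+s})$ that one gets by packaging the $s$ equations into a single polynomial $\sum_i(P_i + y_i^2)^2$ of degree $2D$ and applying Milnor's bound naively. Since in our context the lemma is only used to derive an asymptotic lower bound $n\ln d / \ln(n\ln d)$ on the number of additional players, any polynomial bound of the same qualitative order would suffice, and the sharp constant can be black-boxed from \cite{CO02}.
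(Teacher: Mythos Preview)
The paper gives no proof of this lemma; it is simply quoted from \cite{CO02}. Your proposal therefore goes further than the paper does, sketching the standard Oleinik--Petrovsky--Thom--Milnor route before, in your final sentence, falling back on the same citation for the sharp constant. In that sense you and the paper ultimately agree.

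Two remarks on the sketch itself. First, the classical Milnor bound for a real algebraic set in $\mathbb{R}^{r+s}$ cut out by polynomials of degree at most $D$ is $D(2D-1)^{r+s-1}$, not $(2D-1)D^{r+s-1}$; so even your single-cell estimate does not recover the stated constant without further input. Second, your ``careful accounting (using that distinct cells share boundary pieces, so one does not lose an extra factor in $s$)'' is precisely the nontrivial step: a naive sum over the up to $3^s$ sign conditions would blow up the bound, and avoiding this is what the refined arguments in the literature (Warren, Basu--Pollack--Roy, and the reference \cite{CO02}) accomplish. You are right that for the application in this paper --- deriving the lower bound $\frac{n\ln d}{\ln(n\ln d)}$ on the number of players --- any bound of the form $D^{O(r+s)}$ would suffice, so the precise constant is immaterial here.
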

\begin{corollary}
The set of equilibria of a binary game with $N$ players has at most $2N^{7N}$ connected components.
\end{corollary}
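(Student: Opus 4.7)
The plan is to realize the set of Nash equilibria of a binary $N$-player game as a semi-algebraic set whose parameters $(r,s,D)$ plug directly into the preceding lemma. A mixed strategy profile is a point $x=(x_1,\ldots,x_N)\in[0,1]^N$ (where $x_i$ is the probability that player $i$ plays Top), so we start with $r=N$ variables.

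First I would write the Nash conditions explicitly. For each player $i$, let $U_i^T(x_{-i})$ and $U_i^B(x_{-i})$ denote the expected payoffs when $i$ plays Top or Bottom given the other players' probabilities. Each is multiaffine in the $N-1$ variables $x_{-i}$, hence a polynomial in $x_1,\ldots,x_N$ of total degree at most $N-1$. The best-response condition at player $i$ is equivalent to the conjunction of the four polynomial inequalities
\[
-x_i\le 0,\quad x_i-1\le 0,\quad x_i\,(U_i^B-U_i^T)\le 0,\quad (1-x_i)(U_i^T-U_i^B)\le 0,
\]
the last two having total degree at most $N$. Intersecting over $i\in\{1,\ldots,N\}$, the set of Nash equilibria is defined using only intersections of $s=4N$ polynomial inequalities of total degree $D\le N$ in $r=N$ variables.

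Assuming $N\ge 2$ (so that $D\ge 2$, as required by the lemma), the quoted bound on the number of connected components yields
\[
(2D-1)\,D^{r+s-1}\;\le\;(2N-1)\,N^{5N-1}\;<\;2N^{5N}\;\le\;2N^{7N}.
\]
The case $N=1$ is handled separately and trivially: a one-player binary game has either a unique pure-strategy equilibrium or the entire segment $[0,1]$ as its equilibrium set, so its number of connected components is $1\le 2\cdot 1^{7}$.

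I do not foresee a real obstacle: all the content lies in the preceding lemma, and the task reduces to an efficient encoding of Nash's conditions. The only care needed is to keep the total degree small by not multiplying the best-response differences $U_i^T-U_i^B$ by anything of degree higher than one; the formulation above does this, keeping $D\le N$, so the lemma closes the argument immediately.
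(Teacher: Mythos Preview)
Your argument is correct and follows the same overall strategy as the paper---describe the equilibrium set semi-algebraically and invoke the cited component bound---but your encoding of the Nash conditions differs. The paper keeps the degree at $N-1$ by rewriting each implication as a disjunction, obtaining for each player the set
\[
\bigl(\{x_i\geq 1\}\cup\{g^i\leq h^i\}\bigr)\cap\bigl(\{x_i\leq 0\}\cup\{g^i\geq h^i\}\bigr)\cap\{0\leq x_i\leq 1\},
\]
hence $s=6N$ inequalities of degree $D=N-1$, yielding $(2N-3)(N-1)^{7N-1}<2N^{7N}$ (and treating $N=1,2$ separately since then $D<2$). You instead multiply the implication out, writing $x_i(U_i^B-U_i^T)\le 0$ and $(1-x_i)(U_i^T-U_i^B)\le 0$; this raises the degree to $D=N$ but eliminates all unions and cuts the count to $s=4N$. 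Your route is a bit cleaner---a basic closed set rather than a Boolean combination---and in fact produces the sharper intermediate estimate $2N^{5N}$ before you relax it to $2N^{7N}$; it also lets the lemma cover $N=2$ directly.
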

\begin{proof}
The bound is trivial for $N=1$ or 2 so assume $N\geq 3$.
Let $x_i$ be the probability that Player $i$ plays his first action and denote the payoff of Player $i$ if he plays his first (resp. second) action $g^i(x^1,\cdots,x^{i-1},x^{i+1},\cdots,x^N)$ (resp.  $h^i(x^1,\cdots,x^{i-1},x^{i+1},\cdots,x^N)$) where $g^i$ and $h^i$ are multiaffine.
Since for instance  
\[\left(x_i > 0 \Rightarrow g^i(x^{-i}) \geq h^i(x^{-i}) \right) \Leftrightarrow \left(x_i \leq 0 \mbox{ or }   g^i(x^{-i}) \geq h^i(x^{-i}) \right),\]
the set of equilibria may be written as\[
\bigcap_{i\in N}\left( \{x_i\geq 1\}\cup\{g^i(x^{-i})\leq h^i(x^{-i})\} \right) \cap \left( \{x_i\leq 0\}\cup\{g^i(x^{-i})\geq h^i(x^{-i})\} \right) \cap \left(\{x_i \geq 0\}  \cap \{x_i \leq 1\} \right). 
\]
where we wrote, e.g., $\{x_i \geq 0\}$ instead of $\{x \in \mathbb{R}^n | x_i \geq 0\}$.  By the previous lemma, this set has at most $(2N-3)(N-1)^{N+6N-1}<2N^{7N}$ connected components.
\end{proof}

\begin{corollary}
For every $n\geq 2$ and $d\geq 2$, there exists a basic semi-algebraic set described by a polynomial inequality with $n$ variables and degree $2d$ which is not the set of equilibria (or equilibrium payoffs) of the first $n$ players of any binary game with less than $\frac{n\ln d}{7\ln(n\ln d)}$ players.
\end{corollary}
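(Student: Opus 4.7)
The plan is to exhibit, for each $n\geq 2$ and $d\geq 2$, a basic semi-algebraic set in $\mathbb{R}^n$ given by a single polynomial inequality of degree $2d$ that has as many as $d^n$ connected components, and then invoke the preceding corollary together with the fact that continuous images cannot increase the number of connected components. First, fix $d$ distinct reals $r_1,\ldots,r_d$ in $(0,1)$, set $Q(t):=\prod_{k=1}^d (t-r_k)^2$, and consider
\[ P(x_1,\ldots,x_n) := \sum_{i=1}^n Q(x_i) - \varepsilon, \]
a polynomial in $n$ variables of degree $2d$. I would pick $\delta>0$ smaller than half the minimal gap between the $r_k$, and then $\varepsilon>0$ strictly smaller than the minimum of $Q$ on $\{t\in\mathbb{R} : |t-r_k|\geq\delta \text{ for all } k\}$, which is positive since the $r_k$ are the only zeros of $Q$ and $Q(t)\to\infty$ at infinity. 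Then any $x\in E:=\{P\leq 0\}$ satisfies $Q(x_i)\leq\varepsilon$ for every $i$, forcing each $x_i$ to lie in exactly one of the $d$ open intervals $(r_k-\delta,r_k+\delta)$. Hence $E$ splits into at most $d^n$ cells indexed by $(k_1,\ldots,k_n)\in\{1,\ldots,d\}^n$, each nonempty (it contains its center $(r_{k_1},\ldots,r_{k_n})$) and, for $\delta$ small enough, star-shaped about its center since the restriction of $Q$ to each interval $(r_k-\delta,r_k+\delta)$ is unimodal with minimum at $r_k$. This yields exactly $d^n$ connected components.

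Next, if $E$ were the projection onto the first $n$ coordinates of the equilibrium set $\mathcal{E}$ of some binary $N$-player game, or of its set of equilibrium payoffs (itself the continuous multiaffine image of $\mathcal{E}$), the fact that the continuous image of a connected set is connected would force the number of connected components of $E$ not to exceed that of $\mathcal{E}$. By the preceding corollary this gives $d^n\leq 2N^{7N}$, equivalently $n\ln d\leq \ln 2 + 7N\ln N$.

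Suppose $N<\frac{n\ln d}{7\ln(n\ln d)}$, the statement being vacuous (hence trivially true) when this upper bound does not exceed $1$. Then $\ln N<\ln(n\ln d)-\ln(7\ln(n\ln d))$, which multiplied by $7N$ yields
\[ 7N\ln N < n\ln d - \frac{n\ln d\cdot \ln(7\ln(n\ln d))}{\ln(n\ln d)}. \]
In the nonvacuous range the subtracted term exceeds $\ln 2$, contradicting the previous displayed inequality and completing the argument.

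The main obstacle is the first step: rigorously checking that a single polynomial inequality of degree $2d$ can realize as many as $d^n$ connected components (the unimodality and star-shapedness verification). The remaining estimates reduce to the continuous-image argument and an elementary manipulation of logarithms.
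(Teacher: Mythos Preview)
Your proof is correct and follows essentially the same approach as the paper: the same sum-of-squares polynomial, the same continuous-image/component-count argument against the bound $2N^{7N}$. One simplification worth noting: the paper takes $\varepsilon=0$, so that $E=\{x:\sum_i \prod_j (x_i-\alpha_j)^2\leq 0\}$ is simply the finite set $\{\alpha_1,\ldots,\alpha_d\}^n$ of cardinality $d^n$, which makes the component count immediate and spares you the unimodality and star-shapedness verification.
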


\begin{proof}
Let $(\alpha_1,\cdots,\alpha_d)$ be $d$ arbitrary numbers in $[0,1]$ and consider the set $E=\{x \in \mathbb{R}^n, P(x_1,\cdots,x_n)\leq 0\}$ where $P(x_1,\cdots,x_n):=\sum_{i=1}^n \Pi_{j=1}^d(x_i-\alpha_j)^2$. $E$ is clearly finite with cardinal $d^n$. Assume by contradiction that $E$ is the set of equilibria (or equilibrium payoffs) of the first $n$ players of some binary game $\Gamma$ with less than $N:=\frac{n\ln d}{7\ln(n\ln d)}$ players.
Since taking projections or applying the payoff functions can only decrease the number of connected components, the number of connected components of the equilibrium set of $\Gamma$ is at least $d^n$. However, by the previous corollary, it is at most
\[
2N^{7N}\leq (2N)^{7N} = \left(\frac{2n\ln d}{7\ln(n\ln d)}\right)^{\frac{n\ln d}{\ln(n\ln d)}} <\left(n\ln d\right)^{\frac{n\ln d}{\ln(n\ln d)}} =e^{n\ln d}= d^n
\]
a contradiction. The strict  inequality uses that  $2/7\ln (n \ln d) <1$ as soon as $d^n > \exp(e^{2/7}) \simeq 3,78$. 
\end{proof}

\subsection{Simplicity of the construction}

Our construction does not depend on Tarski-Seidenberg and real algebraic geometry at all, and is easily seen to take a polynomial time in the data. Of course, all this assumes that we are given certificates of semi-algebraicity and closedness (formula \ref{eq;semialg}), of nonemptiness (the coordinates of a point in $E$ or the description of the coordinates of an algebraic point in $E$), as well as a bound on $E$ in the case of Proposition \ref{prop:main1}. 

If not, it is possible to find such certificates but this uses real algebraic techniques (see for example \cite{BPR11}) and is typically at least exponential in the data. So, from a constructive viewpoint, we have cut the problem in two parts :

1) Preprocessing : find certificates. This is in full generality hard and time-consuming, but for specific examples it may be very easy and short, even if the set is itself very complex.

2) Use these certificates to construct a game. This is quite easy and not time-consuming.

%
%
%

\section*{Acknowledments} We thank Sylvain Sorin for helpful remarks and Yehuda John Levy for extensive discussions and for sending us a draft of his article ``Projection and Functions of Nash Equilibria". This research was supported by grant ANR-13-JS01-0004-01 (France).

\bibliographystyle{plain}
\bibliography{semialg}

\begin{thebibliography}{1}

\bibitem{BV14}
D.~Balkenborg and D.~Vermeulen.
\newblock Universality of {N}ash components.
\newblock {\em Games and Economic Behavior}, 86:67--76, 2014.

\bibitem{BPR11}
S.~Basu, R.~Pollack, and M.~Roy.
\newblock Algorithms in real algebraic geometry.
\newblock {\em AMC}, 10:12, 2011.

\bibitem{BE80}
J.~Bochnak and G.~Efroymson.
\newblock Real algebraic geometry and the 17th {H}ilbert problem.
\newblock {\em Mathematische Annalen}, 251(3):213--241, 1980.

\bibitem{BM04}
Y.~Bugeaud and M.~Mignotte.
\newblock On the distance between roots of integer polynomials.
\newblock {\em Proceedings of the Edinburgh Mathematical Society (Series 2)},
  47(03):553--556, 2004.

\bibitem{CO02}
M.~Coste.
\newblock An introduction to semialgebraic geometry.
\newblock {\em RAAG network school}, 145, 2002.

\bibitem{Da03}
R.S. Datta.
\newblock Universality of {N}ash equilibria.
\newblock {\em Mathematics of Operations Research}, 28(3):424--432, 2003.

\bibitem{Ja81}
M.J.M. Jansen.
\newblock Maximal {N}ash subsets for bimatrix games.
\newblock {\em Naval research logistics quarterly}, 28(1):147--152, 1981.

\bibitem{LSV11}
E.~Lehrer, E.~Solan, and Y.~Viossat.
\newblock Equilibrium payoffs of finite games.
\newblock {\em Journal of Mathematical Economics}, 47(1):48--53, 2011.

\bibitem{LE15}
Y.~J. Levy.
\newblock Projections and functions of {N}ash equilibria.
\newblock {\em Preprint}, 2015.

\end{thebibliography}

\end{document}